\newtheorem{theorem}{Theorem}
\newtheorem{lemma}[theorem]{Lemma}
\newtheorem{remark}{Remark}
\newtheorem{counterex}[theorem]{Counterexample}
\newcommand{\R}{\mathbb R}
\newcommand{\C}{\mathbb C}
\newcommand{\ii}{\mathfrak i}
\DeclareMathOperator{\opvec}{vec}
\DeclareMathOperator{\re}{Re}
\DeclareMathOperator{\im}{Im}
\newcommand{\la}{\lambda}
\newcommand{\mS}{{\mathbb S}}
\newcommand{\cI}{{\cal I}}
\newcommand{\cQ}{{\cal Q}}
\title{Uniqueness of solution of systems of generalized Sylvester and conjugate-Sylvester equations}
\author{Fernando De Ter\'an\thanks{Universidad Carlos III de Madrid ({\tt fteran@math.uc3m.es})
}
\ and Bruno Iannazzo\thanks{Università degli Studi di Perugia ({\tt bruno.iannazzo@unipg.it})
}
}
\begin{document}
    \maketitle

\begin{abstract}
  We provide a characterization for a periodic system of generalized Sylvester and conjugate-Sylvester equations, with at most one generalized conjugate-Sylvester equation, to have a unique solution when all coefficient matrices are square and all unknown matrices of the system have the same size. We also present a procedure to reduce an arbitrary system of generalized Sylvester and conjugate-Sylvester equations to periodic systems having at most one generalized conjugate-Sylvester equation. Therefore, the obtained characterization for the uniqueness of solution of periodic systems provides a characterization for general systems of generalized Sylvester and conjugate-Sylvester equations.     

\bigskip

{Keywords:} Generalized Sylvester and conjugate-Sylvester equations; systems of linear matrix equations; formal matrix product; matrix pencils; eigenvalues; periodic Schur decomposition. MSC: 15A22, 15A24, 65F15
\end{abstract}

\section{Introduction}

We discuss the uniqueness of solutions of systems of {\em generalized Sylvester equations}
\begin{equation}\label{eq:1}
    AXB-CXD = E,
\end{equation}and {\em generalized conjugate-Sylvester equations}
\begin{equation}\label{eq:2}
    AXB-C\overline X D = E.
\end{equation}
The matrix $X\in\C^{m\times n}$ is the unknown, $A,C\in\C^{m\times m}$, $B,D\in\C^{n\times n}$, and $\overline M$ denotes the (entrywise) conjugate of the matrix $M$. More precisely, the systems we consider are of the form
\begin{equation}\label{generalsysteme}
         A_i X_{\alpha_i}^{s_i}B_i-C_i  X_{\beta_i}^{t_i}D_i=E_i,\quad i=1,\hdots,r,
\end{equation}
where $s_i,t_i\in\{1,C\}$, for all $i=1,\hdots,r$, with $X^C:=\overline X$, and $\alpha_i,\beta_i$ are positive integers. In other words, the unknowns $X_{\gamma_i}^{d_i}$ are either $X_{\gamma_i}$ or $\overline X_{\gamma_i}$, for $\gamma_i=\alpha_i,\beta_i$ and $d_i=s_i,t_i$, respectively. This notation follows the one in \cite{dipr19}.

Note that the system \eqref{generalsysteme} includes equations of the type $A\overline X B - C \overline X D = E$ and $A\overline X B-CXD=E$, which are generalized Sylvester and conjugate-Sylvester equations, respectively, just replacing the roles of $X$ and $\overline X$.

We assume that all unknown matrices have the same size, namely
$X_{\gamma_i}\in\C^{m\times n}$, for $\gamma_i=\alpha_i,\beta_i$ and for all $i=1,\hdots,r$, and that the coefficient matrices are square. Then, it must be
\begin{equation}\label{size}
    A_i,C_i\in\C^{m\times m}\quad\mbox{and}\quad B_i,D_i\in\C^{n\times n},\quad \mbox{for all $i=1,\hdots, r$}.
\end{equation}

The presence of conjugation in some of the unknowns can make the system \eqref{generalsysteme} non-linear.  Consider, for instance, the case $A=B=C=D=1$, and $E=0$ in \eqref{eq:2}, so that the equation reads $x-\overline x=0$. In this case $x=1$ is a solution, but $\ii x=\ii$ is not.

Equation \eqref{eq:2} is associated with the system of equations (see, for instance, \cite[\S 4.3]{hj-topics})
\begin{equation}\label{matrixsystem}
    (B^\top\otimes A - (D^\top \otimes C)P )\opvec (X) = \opvec(E),
\end{equation}
where $\opvec(M)$ denotes the {\em vectorizing} operator applied to the matrix $M$, which stacks in a column vector the columns of the matrix $M$ one on top of the other, from the first one to the last one (in this order), and $P:\C^{mn}\rightarrow\C^{mn}$ is the conjugation operator, namely such that $P\opvec (X) = \opvec(\overline X)$. The system \eqref{matrixsystem} is not $\C$-linear since $P$ is not $\C$-linear, but if we split the real and imaginary parts it can be written as an equivalent $\R$-linear system
\begin{equation}\label{rlinear}
\begin{bmatrix}
    \re(B^\top \otimes A - D^\top \otimes C) & \im(- B^\top \otimes A - D^\top \otimes C)\\
    \im(B^\top \otimes A - D^\top \otimes C) & \re(B^\top \otimes A + D^\top \otimes C)
\end{bmatrix}
\begin{bmatrix}
\opvec(\re(X))\\ \opvec(\im(X))
\end{bmatrix}
= 
\begin{bmatrix}
\opvec(\re(E))\\ \opvec(\im(E))
\end{bmatrix}.
\end{equation}

The equivalence with an $\R$-linear system makes the uniqueness of solution of \eqref{eq:2}
equivalent to the uniqueness of solution of the homogeneous equation
$A X B - C \overline X D = 0.$ 
A similar (but more involved) approach can be followed for systems like \eqref{generalsysteme}, so that the uniqueness of solution of this system is equivalent to the uniqueness of solution of
\begin{equation}\label{generalsystem}
         A_i X^{s_i}_{\alpha_i}B_i-C_i X^{t_i}_{\beta_i}D_i=0,\quad i=1,\hdots,r.
\end{equation}
Hence, we will focus on these last homogeneous systems.

We will outline a procedure to associate a given general system like \eqref{generalsystem} with equivalent irreducible systems (namely, such that they cannot be split in two systems with independent unknowns) which are periodic and with at most one generalized conjugate-Sylvester equation, in such a way that the original system has a unique solution if and only if the irreducible systems have a unique solution. Then, we will obtain a characterization for the uniqueness of solutions of the resulting systems, which are of the form
\begin{equation}\label{periodic-system}
\left\{\begin{array}{l}
A_i X_i B_i - C_i X_{i+1} D_i = 0,\qquad i=1,\ldots,r-1,\\
A_r X_r B_r - C_r X_1^s D_r = 0,
\end{array}\right.
\end{equation}
with $s\in\{1,C\}$. 

We first address the case where the system does not have any conjugate-Sylvester equation at all (in Section \ref{gensyst_sec}), and then the one in which it has exactly one conjugate-Sylvester equation (in Section \ref{conjugate_sec}). The main results of the paper are Theorem \ref{maingen_th} (for periodic systems of just generalized Sylvester equations) and Theorem \ref{mainconj_th} (for periodic systems with exactly one generalized conjugate-Sylvester equation). 

The characterizations for the uniqueness of solution provided in these two results are given in terms of the spectral information of some matrix pencils constructed from the coefficient matrices of the system, together with their conjugates (in the case of Theorem \ref{mainconj_th}). We want to emphasize that Theorem \ref{maingen_th} is an amended version of Theorem 5 in \cite{dipr19}, which presented an incorrect characterization for the uniqueness of solution of a periodic system of generalized Sylvester equations, following the developments in \cite{byers-rhee}.

We wish to point out a slight update with respect to the result in \cite{dipr19}: Theorems \ref{maingen_th} and \ref{mainconj_th} are valid when the unknown matrices are rectangular, namely $m\times n$, and the coefficient matrices satisfy \eqref{size}, unlike the result in \cite{dipr19}, which was only presented for both the unknowns and the coefficient matrices being all square and of the same size.

There is a large amount of literature dealing with (systems of) Sylvester-like equations over the complex field. The generalized Sylvester equation has been considered since, at least, the 1980's (see, for instance, \cite{epton}). The interest in the generalized conjugate-Sylvester equation seems to be more recent, but despite this, there are many references dealing with this equation, especially focused on numerical methods for approximating the solution, when it exists (see, for instance, \cite{wu-etal2010,wu-etal2011}). In \cite{dfks17} a characterization for the consistency of systems of generalized Sylvester-like equations (including generalized Sylvester and conjugate-Sylvester ones) was provided. Other references have addressed the solvability of conjugate-Sylvester equations like $X-C\overline XD=E$, $XB-C\overline X$, or $AX-C\overline XD=E$, and have provided expressions for the solution, using the so-called ``real representation" \cite{jw03,w13,wdy06,wwd09}. Some recent papers have addressed the numerical solution of systems of generalized Sylvester equations, using either direct methods \cite{dipr19} or iterative methods to approximate either the solution \cite{conjgrad} or the least squares solution \cite{conjgrad-periodic}. Iterative methods have also been considered on systems involving conjugation, where the existence of a unique solution is assumed in advance \cite{wu-etal-unique}. However, up to our knowledge, no explicit characterizations for the uniqueness of solution have been provided so far in the literature.

The paper is organized as follows. In Section \ref{basic_sec} we present the notation and basic definitions of the manuscript. Section \ref{reduction_sec} describes the procedure to associate a given general system \eqref{generalsystem} with irreducible periodic systems having at most one generalized conjugate-Sylvester equation. In Section \ref{gensyst_sec} we address the uniqueness of solution of systems of generalized Sylvester equations, whereas in Section \ref{conjugate_sec} we address the uniqueness of solution for such systems when there is exactly one generalized conjugate-Sylvester equation. In the final section \ref{conclusion_sec} we summarize the main contributions of the paper and outline a future line of research.

\subsection{Basic notation and definitions}\label{basic_sec}

We follow the nomenclature of \cite{dipr19}. In particular, we say that the system \eqref{generalsystem} is {\it nonsingular} if it has only the trivial solution, namely $X_{\alpha_i}=X_{\beta_i}=0$, for all $i=1,\hdots,r$. We also say that the system \eqref{generalsystem}, with $t$ different unknowns, is {\it reducible} if $k$ of these unknowns, with $0<k<t$, appear only in $h$ equations, with $0<h<r$, and the remaining $t-k$ unknowns appear only in the remaining $r-h$ equations. In other words, a reducible system can be partitioned into two systems with no common unknowns. A system is said to be {\it irreducible} if it is not reducible.

A {\em matrix pencil} $M+\la N$, where $M,N$ are matrices of the same size, is {\it regular} if $M$ and $N$ are square and the determinant of the pencil (which is a polynomial in $\la$) is not identically zero. When $M+\la N$ is regular, $\la_0\in\C$ is a {\em finite eigenvalue} of $M+\la N$ if $\det(M +\la_0 N)=0$, and $\infty$ is an eigenvalue of $M+\la N$ if $N$ is not invertible. We use the notation $\Lambda(M+\la N)$ to denote the {\em spectrum} (namely, the set of eigenvalues) of $M+\la N$.

By $I_n$ we denote the identity matrix of size $n$, where we omit the subindex if the size follows from the context, and $\ii$ denotes the imaginary unit (namely, $\ii^2=-1$).

\begin{remark}\label{reversal_remark}
    The {\em reversal} of the pencil $M+\la N$, denoted by ${\rm rev}(M+\la N)$, is the pencil $N+\la M$. Since $\det(N+\la M)=\la^n\det(M+\la^{-1}N)$ (where $M$ and $N$ are of size $n\times n$), then $M+\la N$ is regular if and only if $N+\la M$ is regular and, if $M+\la N$ is regular, then $\Lambda(N+\la M)=\Lambda(M+\la N)^{-1}:=\{\lambda^{-1}:\ \lambda\in\Lambda(M+\la N)\}$, understanding that $1/0=\infty$ and $1/\infty=0$.
\end{remark}

\section{Reduction to a periodic system with at most one generalized conjugate-Sylvester equation}\label{reduction_sec}

Following the developments in \cite[\S5]{dipr19}, we reduce the problem of the uniqueness of solution of \eqref{generalsystem} to the same problem for an irreducible system of the form \eqref{periodic-system}
having exactly $r$ unknowns, each one appearing exactly twice, and with at most one conjugate unknown. The procedure is outlined below.

Let $\mS$ be a system like \eqref{generalsystem}, and let $\{1,\hdots,r\}=\cI_1\cup\cdots\cup \cI_\ell$ be a partition of the set of indices. Then we denote by $\mS(\cI_j)$, for $j=1,\hdots,\ell$, the system of equations comprising the equations with indices in $\cI_j$. With this notation, the reduction to an irreducible system like \eqref{periodic-system} starting from the system \eqref{generalsystem} can be done based on the following facts (in the following, by ``unknown" we mean an unknown matrix $X_i$):
\begin{enumerate}
    \item[(F1)] There is a partition $\mathcal I_1\cup\cdots\cup\,\mathcal I_\ell$ of $\{1,\ldots,r\}$ such that, for each $j=1,\ldots,\ell$, the system $\mS(\cI_j)$ is irreducible \cite[Prop. 1]{dipr19}.

    The system $\mS$ is nonsingular if and only if the system $\mS(\cI_j)$ is nonsingular, for all $j=1,\hdots,\ell$ \cite[Prop. 2]{dipr19}.
    \item[(F2)] If \eqref{generalsystem} is irreducible and nonsingular, the number of unknowns is exactly $r$ \cite[Prop. 3]{dipr19}. 
    \item[(F3)] If \eqref{generalsystem} is irreducible and the unknown $X_j$ appears in only one equation of the system, say $A_d X^{s_j}_j B_d-C_d X_{k}^{s_k}D_d=0$, then \eqref{generalsystem} is nonsingular if and only if $A_d,B_d$ are invertible and the system obtained after removing this equation is nonsingular and irreducible (same if the unknown is $X_k$ with $C_d$ and $D_d$ instead) \cite[Th. 6]{dipr19}.
\end{enumerate}

Using the previous three facts, we can reduce the system \eqref{generalsystem}, that we denote by $\mS$, to a set of irreducible periodic systems following these steps:

\begin{enumerate}

\item[(S1)] Partition $\mS$ as indicated in (F1). By (F2), the number of equations and unknowns in each of the irreducible systems $\mS(\cI_j)$ coincide. Let us concentrate on one of these systems, and let us denote it, again, by $\mS$. 

\item[(S2)]  Remove the equations having an unknown that appears only once in $\mS$. We end up with a new irreducible system, $\widetilde\mS$, where the number of equations and unknowns is the same, and each unknown matrix appears exactly twice. By (F3), the system $\widetilde\mS$ is nonsingular if and only if $\mS$ is nonsingular.

For simplicity, let us denote by $r$, again, the number of equations and unknowns of the new system $\widetilde\mS$ (even though it does not necessarily have the same number of equations and unknowns as the original system $\mS$).
\item[(S3)] By re-labeling the unknowns if necessary, the system $\widetilde\mS$ can be written as
  \begin{equation}\label{periodicbis}
    \begin{array}{cccc}
        A_i X^{s_i}_iB_i-C_i X_{i+1}^{t_i}D_i&=&0,& i=1,\hdots,r-1,\\
        A_r X_r^{s_r}B_r-C_r X^{t_r}_1D_r&=&0,
        \end{array}
    \end{equation}
where $s_i,t_i\in\{1,C\}$ (see \cite[Sec. 5.3]{dipr19}).
\item[(S4)]\label{laststep} There is a system of the form
    \begin{equation}\label{periodicy}
    \begin{array}{cccc}
        \widetilde A_iY_i\widetilde B_i-\widetilde C_i Y_{i+1}\widetilde D_i&=&0,&i=1,\hdots,r-1,\\
        \widetilde A_rY_r\widetilde B_r-\widetilde C_r  Y_1^s\widetilde D_r&=&0,
        \end{array}
    \end{equation}
    with $s\in\{1,C\}$, such that \eqref{periodicy} is nonsingular if and only if \eqref{periodicbis} is nonsingular. Also, $Y_i$ is either $X_i$ or $\overline X_i$, for $i=1,\hdots,r$ (see \cite[Lemma 3]{dipr19}).
\end{enumerate}

The reason why the previous developments, which in \cite{dipr19} are suitable for a system of generalized Sylvester and $\star$-Sylvester equations, work for the system \eqref{generalsystem} as well is the following. In \cite{dipr19}, a system like \eqref{generalsystem} is also considered, but each unknown  $X_i^{s_i}$ or $X_i^{t_i}$ is either $X_i$ or $X_i^\star$, where $\star$ stands for either the transpose or the conjugate transpose. The arguments used in \cite{dipr19} to get a periodic irreducible system with at most one unknown of the form $X_i^\star$ (namely, like the system \eqref{periodic-system} with $X_1^\star$ in the last equation, instead of $\overline X_1$), which support the previous steps (S1)--(S4), are based on the following key property of the $\star$ operator: the unknown 
 $(X_i^{s_i})^\star$ is either $X_i^\star$ (if $X_i^{s_i}=X_i$) or $X_i$ (if $ X_i^{s_i}=X_i^\star$). This property is also satisfied by the conjugate operator, namely $\overline{X_i^{s_i}}$ is either $\overline X_i$ (when $X_i^{s_i}=X_i$) or $X_i$ (when $X_i^{s_i}=\overline X_i$).

Starting from a general system of the form \eqref{generalsystem}, after the steps described in (S1)--(S4), we end up with a set of periodic systems \eqref{periodicy} which are nonsingular if and only if the original system \eqref{generalsystem} is nonsingular. Therefore, from now on we will focus on a periodic system like \eqref{periodicy}. In Section \ref{gensyst_sec} we obtain a characterization for the uniqueness of solution of such a system when $Y_1^s=Y_1$, and in Section \ref{conjugate_sec} we do the same when $Y_1^s=\overline Y_1$.

 We want to emphasize that, in the case in which $m=n$, the procedure from \cite{dipr19} that we have just outlined to reduce the system \eqref{generalsysteme} can be used to reduce a system of the form
$A_iX_{\alpha_i}^{s_i}B_i-C_iX_{\beta_i}^{t_i}D_i=E_i$, $i=1,\ldots,r,$
where $s_i,t_i\in\{1,C,\top,*\}$, to a periodic system \eqref{periodic-system} in which $s=1,C,\top$, or $*$.
The four types of periodic systems can be dealt with Theorems \ref{maingen_th} and \ref{mainconj_th} (if $s=1,C$), and with Theorem 4 of \cite{dipr19} (if $s=\top,*$).

\subsection{On the sign of the second term}\label{sign_sec}

Let us consider the periodic system \eqref{periodic-system}. If we replace the minus signs in the second term of the equations by a plus sign, we get the system:
 \begin{equation}\label{newperiodic+}
    \begin{array}{cccc}
       A_iX_iB_i+ C_i X_{i+1} D_i&=&0,&i=1,\hdots,r-1,\\
         A_rX_r B_r+ C_r  X^s_1 D_r&=&0.
        \end{array}
    \end{equation}
It is not true, in general, that \eqref{periodic-system} is nonsingular if and only if \eqref{newperiodic+} is nonsingular. However, in the following Lemma \ref{plusminus_lem} we identify some situations in which this is true, depending on the parity of the number of equations, namely $r$. We prove, moreover, that, in these situations, the dimension of their solution spaces as a real vector space coincide.

\begin{lemma}\label{plusminus_lem}
 Let $A_i,C_i\in\C^{m\times m}$ and $B_i,D_i\in\C^{n\times n}$. If one of the following situations hold, then the systems \eqref{periodic-system} and \eqref{newperiodic+} have solution spaces with the same dimension:
 \begin{itemize}
     \item[\rm(i)] $r$ is even and $ X_1^s=X_1$ in the last equation.
     \item[\rm(ii)] $r$ is odd and $ X_1^s=\overline X_1$ in the last equation.
 \end{itemize}
\end{lemma}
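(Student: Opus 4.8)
The plan is to exhibit an explicit $\R$-linear isomorphism between the solution space of \eqref{periodic-system} and that of \eqref{newperiodic+}, given by a ``diagonal'' rescaling of the unknowns $\Phi(X_1,\ldots,X_r)=(\epsilon_1 X_1,\ldots,\epsilon_r X_r)$ for suitable nonzero scalars $\epsilon_i\in\C$. Since multiplication by a nonzero complex number is an $\R$-linear bijection of $\C^{m\times n}$, the map $\Phi$ is automatically an $\R$-linear automorphism of $(\C^{m\times n})^r$; the whole content is to choose the $\epsilon_i$ so that $\Phi$ carries the solutions of \eqref{periodic-system} exactly onto the solutions of \eqref{newperiodic+}. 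Once this is arranged, the restriction of $\Phi$ is an isomorphism of the two (real) solution spaces and their dimensions coincide.

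First I would substitute $X_i\mapsto\epsilon_i X_i$ into \eqref{newperiodic+} and ask when each resulting equation is a nonzero scalar multiple of the corresponding equation of \eqref{periodic-system}. In the conjugation-free equations $i=1,\ldots,r-1$, the substitution turns the left-hand side into $\epsilon_i A_iX_iB_i+\epsilon_{i+1}C_iX_{i+1}D_i$; imposing $\epsilon_{i+1}=-\epsilon_i$ rewrites this as $\epsilon_i(A_iX_iB_i-C_iX_{i+1}D_i)$, which vanishes if and only if the $i$-th equation of \eqref{periodic-system} does. This forces $\epsilon_i=(-1)^{i-1}\epsilon_1$. The last equation then supplies the ``closing'' compatibility condition: in case (i) the left-hand side becomes $\epsilon_r A_rX_rB_r+\epsilon_1 C_rX_1D_r$, a scalar multiple of $A_rX_rB_r-C_rX_1D_r$ exactly when $\epsilon_r=-\epsilon_1$; in case (ii), since $\overline{\epsilon_1 X_1}=\overline{\epsilon_1}\,\overline{X_1}$, it becomes $\epsilon_r A_rX_rB_r+\overline{\epsilon_1}C_r\overline{X_1}D_r$, a multiple of $A_rX_rB_r-C_r\overline{X_1}D_r$ exactly when $\epsilon_r=-\overline{\epsilon_1}$.

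It is exactly here that the parity of $r$ enters, via $\epsilon_r=(-1)^{r-1}\epsilon_1$. In case (i) the closing condition $\epsilon_r=-\epsilon_1$ is solvable with $\epsilon_1\neq0$ precisely when $(-1)^{r-1}=-1$, i.e.\ $r$ even; one may take $\epsilon_1=1$, so $\epsilon_i=(-1)^{i-1}$. In case (ii) the condition $\epsilon_r=-\overline{\epsilon_1}$ reads $(-1)^{r-1}\epsilon_1=-\overline{\epsilon_1}$; choosing the purely imaginary scalar $\epsilon_1=\ci$ (so $\overline{\epsilon_1}=-\ci$) this becomes $(-1)^{r-1}\ci=\ci$, which holds precisely when $r$ is odd, giving $\epsilon_i=(-1)^{i-1}\ci$. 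With these choices the equation-by-equation computation above is an ``if and only if'' (each $\epsilon_i\neq0$), so $X$ solves \eqref{periodic-system} if and only if $\Phi(X)$ solves \eqref{newperiodic+}; hence $\Phi$ maps the solution space of \eqref{periodic-system} bijectively onto that of \eqref{newperiodic+}, and being $\R$-linear and invertible it preserves the real dimension.

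The only genuinely delicate point is the bookkeeping of the conjugation in case (ii): a plain sign flip $\epsilon_i=\pm1$ does not close up when $r$ is odd, and one is forced to use a non-real scalar whose conjugate $\overline{\epsilon_1}=-\ci$ absorbs the mismatch introduced by the bar on $X_1$. I expect this interaction between the complex scalar and the entrywise conjugation---together with keeping the alternating signs consistent around the cycle---to be the main thing to get right. By contrast, the $\R$-linearity and invertibility of $\Phi$ are immediate, and the verification that $\Phi$ sends solutions to solutions reduces to the one-line substitution carried out in each of the two families of equations.
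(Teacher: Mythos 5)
Your proof is correct and takes essentially the same approach as the paper: the paper's proof consists precisely of exhibiting the scaling maps you derive, namely $(X_1,\hdots,X_r)\mapsto (X_1,-X_2,\hdots,X_{r-1},-X_r)$ in case (i) and $(X_1,\hdots,X_r)\mapsto \ii(X_1,-X_2,\hdots,-X_{r-1},X_r)$ in case (ii), observing that they carry solutions of one system to solutions of the other and preserve linear independence of families of solutions, hence dimension. Your systematic derivation of the scalars $\epsilon_i=(-1)^{i-1}$ and $\epsilon_i=(-1)^{i-1}\ii$ (with the parity of $r$ entering through the closing condition around the cycle) is just a more explicit route to the same bijection, and your appeal to the $\R$-linearity and invertibility of $\Phi$ is equivalent to the paper's linear-independence argument.
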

\begin{proof}
    To prove (i) just note that, when $r$ is even and $ X_1^s=X_1$, then $(X_1,\hdots,X_r)$ is a solution of \eqref{newperiodic+} if and only if $(X_1,-X_2,\hdots,X_{r-1},-X_r)$ is a solution of \eqref{periodic-system}, and that $(X_1^{(1)},\hdots,X_r^{(1)}),\hdots,\allowbreak(X_1^{(\ell)},\hdots,X_r^{(\ell)})$ are linearly independent if and only if $(X_1^{(1)},-X_2^{(1)},\hdots,X_{r-1}^{(1)},-X_r^{(1)}),\hdots,$\break $(X_1^{(\ell)},-X_2^{(\ell)},\hdots ,X_{r-1}^{(\ell)},-X_r^{(\ell)})$ are linearly independent, for $\ell\geq1$.

    To prove (ii), note that, when $r$ is odd and $ X^s_1=\overline X_1$, then $(X_1,\hdots,X_r)$ is a solution of \eqref{newperiodic+} if and only if $\ii(X_1,-X_2,\hdots,-X_{r-1},X_r)$ is a solution of \eqref{periodic-system}, and that $(X_1^{(1)},\hdots,X_r^{(1)}),\hdots,\allowbreak(X_1^{(\ell)},\hdots,X_r^{(\ell)})$ are linearly independent if and only if $\ii(X_1^{(1)},-X_2^{(1)},\hdots,-X_{r-1}^{(1)},X_r^{(1)}),\hdots,$\break$\ii(X_1^{(\ell)},-X_2^{(\ell)},\hdots,-X_{r-1}^{(\ell)},X_r^{(\ell)})$ are linearly independent, for $\ell\geq1$.
\end{proof}

\section{Periodic systems of generalized Sylvester equations}\label{gensyst_sec}

The main result in this section is Theorem \ref{maingen_th}, where we provide a characterization for the uniqueness of solution of a periodic system \eqref{periodic-system} when $X_1^s=X_1$ in the last equation.

\begin{theorem}\label{maingen_th}
     Let $A_i,C_i\in\mathbb C^{m\times m}$ and $B_i,D_i\in\mathbb C^{n\times n}$, for $i=1,\hdots,r$, and $r>1$. The system
    \begin{equation}\label{system}
\left\{\begin{array}{cccc}
			A_i X_i  B_i -  C_i X_{i+1} D_i &= & 0, & i = 1, \ldots, r-1, \\
			 A_r X_r  B_r - C_r X_{1} D_r &=&  0
			\end{array}\right.
    \end{equation}
 has only the trivial solution if and only if the matrix pencils
\begin{equation}\label{pencils}
\left[\begin{array}{cccc}
\la C_{r}&A_r&&\\
&\la C_{r-1}&\ddots&\\
&&\ddots&A_{2}\\
A_1&&&\la C_1
\end{array}\right]\quad\mbox{and}\quad
\left[\begin{array}{cccc}
\la B_r&D_{r-1}\\
&\la B_{r-1}&\ddots&\\
&&\ddots&D_1\\
D_r&&&\la B_1
\end{array}\right]
\end{equation}
are regular and have disjoint spectra.  
\end{theorem}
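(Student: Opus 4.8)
The plan is to pass to the vectorized form of the system, read off nonsingularity as an eigenvalue condition on a ``monodromy'' operator, and then match that condition with the spectral data of the two pencils in \eqref{pencils}.

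First I would vectorize. Writing $x_i=\opvec(X_i)$ and using $\opvec(AXB)=(B^\top\otimes A)\opvec(X)$, the system \eqref{system} becomes $\mathcal M\,(x_1^\top,\dots,x_r^\top)^\top=0$, where $\mathcal M$ is the cyclic block-bidiagonal matrix with diagonal blocks $B_i^\top\otimes A_i$ and cyclic superdiagonal blocks $-(D_i^\top\otimes C_i)$. Thus \eqref{system} has only the trivial solution iff $\mathcal M$ is nonsingular. In the generic case where all $A_i,B_i$ are invertible, eliminating $x_1,\dots,x_r$ around the cycle shows that $\mathcal M$ is singular iff the monodromy operator $\mathcal P:=\prod_{i=1}^r (B_i^\top\otimes A_i)^{-1}(D_i^\top\otimes C_i)$ has the eigenvalue $1$. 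Using $(P\otimes Q)(R\otimes S)=PR\otimes QS$ and $(M\otimes N)^{-1}=M^{-1}\otimes N^{-1}$, this operator factors as $\mathcal P=\Pi_R^\top\otimes\Pi_L$, where $\Pi_L:=\prod_{i=1}^r A_i^{-1}C_i$ is a left formal product built from $A_i,C_i$ and $\Pi_R$ is a companion product built from $B_i,D_i$. Since the eigenvalues of a Kronecker product are the pairwise products of the eigenvalues of the factors, and $\Lambda(\Pi_R^\top)=\Lambda(\Pi_R)$, I conclude that \eqref{system} has a nontrivial solution iff there exist $\theta\in\Lambda(\Pi_L)$ and $\sigma\in\Lambda(\Pi_R)$ with $\theta\sigma=1$.

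Next I would identify the spectra of the two pencils with $r$-th roots of these same products. A kernel vector $(v_1,\dots,v_r)$ of the first pencil in \eqref{pencils} at a finite eigenvalue $\la$ satisfies the cyclic recursion $A_{r+1-k}v_{k+1}=-\la C_{r+1-k}v_k$ and $A_1v_1=-\la C_1 v_r$; eliminating around the cycle yields $\Pi_L v_1=(-1)^r\la^{-r}v_1$, so the finite eigenvalues of the first pencil are exactly the numbers $\la$ with $\la^{r}=(-1)^r\theta^{-1}$, $\theta\in\Lambda(\Pi_L)$. An analogous computation for the second pencil gives $\mu^{r}=(-1)^r\sigma$ with $\sigma$ ranging over $\Lambda(\Pi_R)$, the order-reversal and inversion inherent in that cyclic elimination being absorbed into the definition of $\Pi_R$. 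Now the sets $\{z:z^r=a\}$ and $\{z:z^r=b\}$ are disjoint when $a\neq b$ and identical when $a=b$; since both pencils carry the same factor $(-1)^r$, the two spectra meet iff $\theta^{-1}=\sigma$ for some pair, i.e. iff $\theta\sigma=1$. Combining with the previous paragraph, disjointness of the spectra is equivalent to admitting only the trivial solution, which is the asserted characterization in the invertible case.

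The hard part is to remove the invertibility hypothesis and to account for infinite eigenvalues and the regularity clause; this is where I would do the real work. My plan is to replace the ad hoc elimination by the generalized periodic Schur decomposition, simultaneously triangularizing the pairs $(A_i,C_i)$ and $(B_i,D_i)$ by unitary transformations acting on the left and on the right of the unknowns $X_i$. This turns \eqref{system} into a triangular periodic system which, by back-substitution, has only the trivial solution iff each scalar diagonal channel does; the channel indexed by $(p,q)$ is the scalar periodic system with coefficients $\hat A_{i,pp},\hat C_{i,pp},\hat B_{i,qq},\hat D_{i,qq}$, whose nonsingularity condition $\prod_i(\hat A_{i,pp}\hat B_{i,qq})\neq\prod_i(\hat C_{i,pp}\hat D_{i,qq})$ can be checked by hand. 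Regularity of each pencil is then precisely the statement that no channel is doubly degenerate (both triangular products vanishing), which would force a free solution, and the $0/\infty$ bookkeeping needed to match infinite eigenvalues of the two pencils is handled through the reversal of Remark \ref{reversal_remark}. The two genuine obstacles I anticipate are verifying that the back-substitution produces no spurious solutions once every diagonal channel is nonsingular, and checking that the disjoint-spectra condition survives intact in the presence of infinite eigenvalues; both are bookkeeping-heavy but not conceptually deep once the periodic Schur reduction is in place.
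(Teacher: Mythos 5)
Your proposal follows, in substance, the same route as the paper; it mainly inlines what the paper outsources. The paper invokes Theorem 2 of \cite{dipr19} (the system \eqref{system} is nonsingular if and only if the formal products $C_r^{-1}A_r\cdots C_1^{-1}A_1$ and $D_rB_r^{-1}\cdots D_1B_1^{-1}$ are regular with disjoint spectra), whose proof in \cite{dipr19} is precisely the periodic-Schur-plus-scalar-channel argument you defer to; it then converts products into the pencils \eqref{pencils} via Lemma 1 of \cite{dipr19} and Lemma \ref{tech_lem}, whose proof is exactly your cyclic kernel elimination in the invertible case followed by the periodic Schur identity $\det\cQ_1(\la)=\pm\det(\la^r T_1\cdots T_r-R_1\cdots R_r)$ in general, and it ends with the same sign flip between $M-\la N$ and $M+\la N$. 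The one genuinely different ingredient you bring is the vectorization and the Kronecker factorization $\mathcal P=\Pi_R^\top\otimes\Pi_L$ of the monodromy, which is a transparent way to see why exactly two decoupled formal products govern the problem; but it requires $A_i,B_i$ invertible, so it cannot replace the periodic Schur machinery, and the real work remains exactly where you say it does.

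In that deferred part there is one claim that is false as stated. You assert that regularity of the two pencils is ``precisely the statement that no channel is doubly degenerate,'' where the channel $(p,q)$ carries the products $\prod_i\hat A_{i,pp}\hat B_{i,qq}$ and $\prod_i\hat C_{i,pp}\hat D_{i,qq}$. Consider a mixed degeneracy: $\prod_i\hat A_{i,pp}=0=\prod_i\hat D_{i,qq}$ while $\prod_i\hat C_{i,pp}\neq0\neq\prod_i\hat B_{i,qq}$. Both channel products vanish, so the channel (hence the system) is singular; yet both pencils are regular, because regularity of the first pencil only excludes indices $p$ with $\prod_i\hat A_{i,pp}=0=\prod_i\hat C_{i,pp}$, and regularity of the second only excludes indices $q$ with $\prod_i\hat B_{i,qq}=0=\prod_i\hat D_{i,qq}$. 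What catches this mixed case is the disjointness clause, not regularity: here $0$ belongs to both spectra, and in the opposite mixed case ($\prod_i\hat C_{i,pp}=0=\prod_i\hat B_{i,qq}$) the shared eigenvalue is $\infty$. So the correct dictionary has three cases rather than two: one-sided double degeneracies (in $p$ alone, or in $q$ alone) correspond to loss of regularity; mixed double degeneracies correspond to the spectra meeting at $0$ or at $\infty$; and the remaining channel singularities correspond to the spectra meeting at finite nonzero points. Only with this split does ``all scalar channels nonsingular'' become equivalent to ``both pencils regular with disjoint spectra''; with the dictionary as you state it, the final equivalence does not close, so this is a genuine correction to make rather than mere bookkeeping.
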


In order to prove Theorem \ref{maingen_th} we will make use of Lemma \ref{tech_lem} below. It deals with {\it formal matrix products}, namely products of the form $\Pi:=M_rN_r^{-1}M_{r-1}N_{r-1}^{-1}\cdots M_1N_1^{-1}$, where $M_i,N_i$, for $1\leq i\leq r$, are complex square matrices of the same size, say $n\times n$. These products are defined even when some of the matrices $N_i$, for $1\leq i\leq r$, are not invertible. Actually, the only ingredients we need from a formal product are its {\it eigenvalues}. The eigenvalues of a formal matrix product can be obtained from the {\it periodic Schur decomposition} (introduced in \cite{bojanczyk1992periodic}, see \cite[Th. 1]{dipr19}). This decomposition guarantees that the formal product $\Pi$ has the same eigenvalues as the formal product of the form $R_rT_r^{-1}R_{r-1}T_{r-1}^{-1}\cdots R_1T_1^{-1}$, where the matrices $R_i$ and $T_i$, for $1\leq i\leq r$, are all upper triangular. Then we set:
\begin{equation}\label{evalsprod}
\lambda_i=\frac{(R_1)_{ii}(R_2)_{ii}\cdots(R_r)_{ii}}{(T_1)_{ii}(T_2)_{ii}\cdots(T_r)_{ii}},\qquad i=1,2,\hdots,n,
\end{equation}
with the convention that $\la_i=\infty$ if $(T_1)_{ii}(T_2)_{ii}\cdots(T_r)_{ii}=0\neq (R_1)_{ii}(R_2)_{ii}\cdots(R_r)_{ii}$. If there is some $1\leq i\leq r$ such that $(T_1)_{ii}(T_2)_{ii}\cdots(T_r)_{ii}=0= (R_1)_{ii}(R_2)_{ii}\cdots(R_r)_{ii}$ the formal product $\Pi$ is called {\it singular}, otherwise it is called {\it regular} (see \cite[p. 5--6]{dipr19} for the analogous notions on formal matrix products of the form $M_r^{-1}N_r\cdots M_1^{-1}N_1$). If $\Pi$ is regular, then $\la_1,\hdots,\la_n$ in \eqref{evalsprod} are the {\it eigenvalues} of $\Pi$.

Lemma \ref{tech_lem} is the counterpart of Lemma 1 in \cite{dipr19}, which deals with formal matrix products of the form $M_r^{-1}N_r\cdots M_1^{-1}N_1$ instead.

\begin{lemma}\label{tech_lem}
    Let $M_1,\hdots,M_r,N_1,\hdots,N_r\in\mathbb C^{n\times n}$ and let $\Pi=M_rN_r^{-1}\cdots M_1N_1^{-1}$ be a formal matrix product. Then, the matrix pencils
    $$
    \cQ_1(\la):=\begin{bmatrix}
        \la M_r&&&- N_1\\-N_r&\la M_{r-1}&&\\&\ddots&\ddots&&\\&&-N_2&\la M_1
    \end{bmatrix}\quad\mbox{and}\quad \cQ_2(\la)=\begin{bmatrix}
        \la N_r&-M_{r-1}\\&\la N_{r-1}&\ddots\\&&\ddots&-M_1\\-M_r&&&\la N_1
    \end{bmatrix}
    $$
    are regular if and only if $\Pi$ is regular. In this case, 
    \begin{itemize}
        \item[\rm(i)]  the eigenvalues of $\cQ_1(\la)$ are the inverses of the $r$th roots of the eigenvalues of $\Pi$, and
        \item[\rm(ii)] the eigenvalues of $\cQ_2(\la)$ are the $r$th roots of the eigenvalues of $\Pi$,
    \end{itemize} 
with the convention that $\sqrt[r]{\infty}=\infty.$
\end{lemma}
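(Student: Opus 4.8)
The plan is to connect the spectral data of the block-cyclic pencils $\cQ_1(\la)$ and $\cQ_2(\la)$ to the eigenvalues of the formal product $\Pi = M_r N_r^{-1}\cdots M_1 N_1^{-1}$ by explicitly relating the determinantal equations of the pencils to the characteristic data of $\Pi$. The key observation is that a block-cyclic pencil of the stated shape has a determinant that can be computed by a cofactor-type expansion along the cyclic structure, yielding something of the form $\pm\bigl(\la^{r}\det(N_r\cdots N_1) - \det(M_r\cdots M_1)\bigr)$ after a suitable reduction, but since the $N_i$ need not be invertible I cannot simply factor them out. So first I would handle the \emph{regular case} where all $N_i$ are invertible, and then reduce the general case to it via the periodic Schur decomposition, which is the tool the excerpt has already set up for defining the eigenvalues of $\Pi$.

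First I would reduce to triangular form. By the periodic Schur decomposition, there exist unitary $U_0,U_1,\ldots,U_r$ (with $U_r = U_0$ in the cyclic setting) such that $R_i = U_{i-1}^{*} M_i U_i$ and $T_i = U_{i-1}^{*} N_i U_i$ are all upper triangular and $\Pi$ has the same eigenvalues as $R_r T_r^{-1}\cdots R_1 T_1^{-1}$, with the diagonal entries governed by \eqref{evalsprod}. I would then apply the block-diagonal unitary transformations $\operatorname{diag}(U_{r}^*,U_{r-1}^*,\ldots)$ on the left and $\operatorname{diag}(U_{r-1},U_{r-2},\ldots)$ on the right to $\cQ_1(\la)$ (and analogously for $\cQ_2(\la)$), chosen so that each block $\la M_i$ becomes $\la R_i$ and each $-N_j$ becomes $-T_j$. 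These transformations are products by unitary (hence invertible) matrices, so they preserve regularity and the spectrum of the pencils. After this step every block is upper triangular, so the whole block-cyclic pencil is, up to a fixed permutation of rows and columns, a direct sum (or at least a triangularly structured pencil) whose determinant factors into a product over the diagonal index $i=1,\ldots,n$.

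Second, I would compute the determinant of the triangularized $\cQ_1(\la)$. Because the blocks are upper triangular, the $n$ diagonal positions decouple: restricting to the $i$th diagonal entry of every block yields a scalar block-cyclic pencil whose determinant is, by direct expansion, $\pm\bigl(\la^{r}(R_1)_{ii}\cdots(R_r)_{ii} - (T_1)_{ii}\cdots(T_r)_{ii}\bigr)$ — here the cyclic term $-N_1$ (respectively $-M_r$) in the corner is what produces the $\la^{r}$ versus constant competition. Hence $\det\cQ_1(\la) = \pm\prod_{i=1}^{n}\bigl(\la^{r} \rho_i - \tau_i\bigr)$ where $\rho_i=(R_1)_{ii}\cdots(R_r)_{ii}$ and $\tau_i=(T_1)_{ii}\cdots(T_r)_{ii}$. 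This factor is identically zero precisely when $\rho_i=\tau_i=0$ for some $i$, which is exactly the definition of $\Pi$ being \emph{singular}; so regularity of $\cQ_1(\la)$ is equivalent to regularity of $\Pi$. When $\Pi$ is regular, the roots of $\la^r\rho_i-\tau_i$ are the $r$th roots of $\tau_i/\rho_i = \la_i^{-1}$ (with the stated $\infty$ conventions when $\rho_i$ or $\tau_i$ vanishes), which are the inverses of the $r$th roots of $\la_i$, giving part (i). The same computation for $\cQ_2(\la)$, whose scalar reduction gives $\pm(\la^{r}\tau_i - \rho_i)$ with the roles of $M$ and $N$ swapped, yields the $r$th roots of $\la_i$ directly, giving part (ii); alternatively I would note $\cQ_2$ is a reversal-type rearrangement of $\cQ_1$ and invoke Remark \ref{reversal_remark}.

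The main obstacle is bookkeeping the exact permutation and sign that turn the block-cyclic pencil into a scalar-decoupled product, and, more substantively, verifying that the $\la^r$-versus-constant structure survives for a genuinely cyclic (not merely bidiagonal) pencil: the off-diagonal $-N_i$ blocks and the single corner block together form one long cycle, so the determinant is not simply a product of $2\times2$ minors but must be expanded along the unique permutation that traverses the entire cycle. I expect to treat this by a careful Leibniz/cofactor argument showing that only two permutations contribute (the "all-$\la M_i$ diagonal" term giving $\la^{r}\rho_i$ and the "full cycle through the $-N_i$ and the corner" term giving $\pm\tau_i$), with all intermediate permutations killed by the triangular zero pattern. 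Establishing that these are the only two surviving permutations — and pinning down the sign $\pm$ so that the stated $r$th-root conventions, including $\sqrt[r]{\infty}=\infty$, come out correctly — is the delicate part; everything else is the routine transfer of regularity and spectrum under the unitary and permutation similarities.
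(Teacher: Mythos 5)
Your strategy is sound, and it is genuinely different from the paper's proof. The paper first treats the case where all $N_i$ are invertible by an explicit eigenvector correspondence (from $\cQ_1(\la_0)u=0$ it extracts $\Pi(N_1u_r)=\la_0^{-r}(N_1u_r)$, and conversely builds an eigenvector of $\cQ_1$ from one of $\Pi$), and only afterwards invokes the periodic Schur decomposition, citing the arguments from the proof of Lemma 1 in \cite{dipr19}, to get $\det\cQ_1(\la)=\pm\det(\la^rT_1\cdots T_r-R_1\cdots R_r)$ in general; part (ii) is then obtained from the identity $\cQ_2(\la)=-P\,{\rm rev}\,\cQ_1(\la)$. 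Your proof dispenses with the eigenvector argument and the case split entirely: triangularize all blocks at once, decouple by the perfect-shuffle permutation into $n$ scalar $r\times r$ cyclic pencils, and expand each cyclic determinant. The part you flag as delicate is in fact easy once the decoupling is done: in the scalar $r\times r$ cyclic matrix, if a permutation selects the corner entry, the rest of the full cycle is forced column by column, so only the identity and the $r$-cycle contribute, giving $\det\cQ_1(\la)=\prod_{i=1}^n(\la^r\rho_i-\tau_i)$ exactly. This yields regularity and both eigenvalue statements uniformly, and amounts to a self-contained proof of the determinant identity that the paper imports from \cite{dipr19}. That is a real gain in transparency.

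There is, however, one genuine error to repair: the periodic Schur decomposition as you state it does not exist. You ask for unitaries $U_0,\ldots,U_r$ with $U_r=U_0$ such that $R_i=U_{i-1}^*M_iU_i$ and $T_i=U_{i-1}^*N_iU_i$ are all upper triangular, i.e., $M_i$ and $N_i$ share \emph{both} the left and the right transformation, with only $r$ distinct unitaries in total. Already for $r=1$ (where your cyclic convention forces $U_1=U_0$) this demands simultaneous upper-triangularization of $M_1$ and $N_1$ by a single unitary \emph{similarity}, which fails for generic pairs (they need not have a common eigenvector); and for $r\geq 2$ the product $R_rT_r^{-1}\cdots R_1T_1^{-1}=U_{r-1}^*M_rN_r^{-1}U_{r-1}\cdots U_0^*M_1N_1^{-1}U_0$ does not telescope, so it need not have the same eigenvalues as $\Pi$. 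The correct decomposition (the one in \cite[Th. 1]{dipr19}, applied to the formal product with $2r$ factors) uses $2r$ unitaries $Q_0,Q_1,\ldots,Q_{2r}=Q_0$, with $R_k=Q_{2k}^*M_kQ_{2k-1}$ and $T_k=Q_{2k-2}^*N_kQ_{2k-1}$: so $M_k$ and $N_k$ share only the \emph{right} unitary, while $M_k$ and $N_{k+1}$ share the \emph{left} one. Fortunately, this is exactly the sharing pattern that the block structure of $\cQ_1(\la)$ demands, since block row $k$ contains $\la M_{r-k+1}$ and $-N_{r-k+2}$, while block column $k$ contains $\la M_{r-k+1}$ and $-N_{r-k+1}$; one checks that
$\diag(Q_{2r}^*,Q_{2r-2}^*,\ldots,Q_2^*)\,\cQ_1(\la)\,\diag(Q_{2r-1},Q_{2r-3},\ldots,Q_1)$
has every block upper triangular, and similarly for $\cQ_2(\la)$. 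With this correction (which preserves regularity and eigenvalues, being a unitary equivalence), the rest of your argument goes through as written.
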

\begin{proof}
    Let us proceed as in the proof of Lemma 1 in \cite{dipr19} with $\cQ_1(\la)$.
    
    First, assume that all matrices $N_1,\hdots,N_r$ are invertible. In this case, the pencil $\cQ_1(\la)$ is regular and $0\not\in\Lambda(\cQ_1)$. Let $\la_0\neq\infty$ be an eigenvalue of $\cQ_1(\la)$. Then, there is a nonzero vector $u=\left[\begin{smallmatrix}
        u_1^\top&u_2^\top&\cdots&u_r^\top
    \end{smallmatrix}\right]^\top$  such that $\cQ_1(\la_0)u=0$, so 
    \begin{equation}\label{identities}
    \la_0M_ru_1=N_1u_r,\ \la_0M_{r-1}u_2=N_ru_1,\hdots,\ \la_0M_{2}u_{r-1}=N_{3}u_{r-2},\ \la_0M_1u_r=N_{2}u_{r-1}.
    \end{equation}
    If $u_r=0$, then the previous identities (starting from the rightmost one and moving leftwards), together with the fact that $N_1,\hdots,N_r$ are invertible, imply $u_{r-1}=\cdots=u_1=0$, which would imply $u=0$, contradicting the hypothesis, so it must be $u_r\neq0$. Now, by \eqref{identities}, $\la_0^rM_rN_r^{-1}\cdots N_{2}^{-1} M_1u_r=N_1u_r$, so $\la_0^rM_rN_r^{-1}\cdots N_{2}^{-1} M_1N_1^{-1}(N_1u_r)=N_1u_r$, namely, $N_1u_r$ (which is nonzero because $N_1$ is invertible) is an eigenvector of the formal product $\Pi$ associated with the eigenvalue $1/\la_0^r$.

    Conversely, let $\lambda_0$ be such that $\mu=1/\la_0^r$ is a finite eigenvalue of the formal product $\Pi$ and $u\neq0$ be such that $\Pi u=\mu u$. If we set
    $$
    u_r=N_1^{-1}u\ne 0,u_{j}=\la_0N_{r-j+1}^{-1}M_{r-j}u_{j+1},\ j=r-1,\hdots,1,
    $$    
    then $\la_0M_{r-j}u_{j+1}-N_{r-j+1}u_{j}=0$, for $j=1,\hdots,r-1$, and 
    $$
    \la_0M_ru_1-N_1u_r=\la_0^r M_rN_r^{-1}\cdots N_2^{-1}M_1N_1^{-1}u-N_1u_r=(1/\mu)\Pi u-u=0,
    $$
    which means that $\left[\begin{smallmatrix}
        u_1^\top&u_2^\top&\cdots&u_r^\top
    \end{smallmatrix}\right]^\top$ is an eigenvector of $\cQ_1(\la)$ associated with the eigenvalue $\la_0$.

    Note that $\infty\in\Lambda(\cQ_1)$ if and only if $0$ is an eigenvalue of $\Pi$, since in both cases this is equivalent to at least one of the matrices $M_1,\hdots,M_r$ being singular.

    We have proved, so far, that the eigenvalues of $\cQ_1(\la)$ coincide with the eigenvalues of the formal product $\Pi$ when all matrices $N_1,\hdots,N_r$ are invertible. To prove it when at least one of them is not invertible, as well as the claim that $\cQ_1(\la)$ is regular if and only if $\Pi$ is regular, we can proceed as in the last paragraph of the proof of Lemma 1 in \cite{dipr19}. More precisely, using the periodic Schur decomposition we can replace the matrices $M_1,\hdots,M_r$ and $N_1,\hdots,N_r$ by upper triangular matrices denoted, respectively, by $T_1,\hdots,T_r$ and $R_1,\hdots,R_r$. Then, the arguments in the last paragraph of the proof of Lemma 1 in \cite{dipr19}, together with the previous arguments, allow us to conclude that $\det\cQ_1(\la)=\pm\det(\la^r T_1\cdots T_r- R_1\cdots R_r)$, which immediately implies that $\cQ_1(\la)$ is regular if and only if $\Pi$ is regular, and that when they are both regular (including the case in which some of the matrices $N_1,\hdots,N_r$ are not invertible), the eigenvalues of $\cQ_1(\la)$ are the inverses of the $r$th roots of the eigenvalues of $\Pi$. This proves the first claim in the statement, together with part (i).

    For part (ii), note that $\cQ_2(\la)=-P\,{\rm rev}\cQ_1(\la)$, where $P$ is the block permutation matrix 
    $
    P=\left[\begin{smallmatrix} & 1\\ & & \ddots\\ & & & 1\\1\end{smallmatrix}\right]\otimes I_n,   
    $
   so the eigenvalues of $\cQ_2(\la)$ are the inverses of the eigenvalues of $\cQ_1(\la)$.
\end{proof}

\begin{proof} (of Theorem \ref{maingen_th}). The result is a consequence of Lemma \ref{tech_lem}, together with Lemma 1 and Theorem 2 in \cite{dipr19}. More precisely, by Theorem 2 in \cite{dipr19}, which is valid for $A_i,C_i\in\mathbb C^{m\times m}$ and $B_j,D_j\in\mathbb C^{n\times n}$ (the proof only requires the appropriate adjustments in the proof provided in \cite{dipr19}, namely replacing ``$i,j=1,\hdots,n$" by ``$i=1,\hdots,m$ and $j=1,\hdots,n$"), the system \eqref{system} has only the trivial solution if and only if the formal matrix products
$$
C_r^{-1}A_rC_{r-1}^{-1}A_{r-1}\cdots C_1^{-1}A_1\qquad\mbox{and}\qquad D_rB_r^{-1}D_{r-1}B_{r-1}^{-1}\cdots D_1B_1^{-1}
$$
are regular and have disjoint spectra. Now, by Lemma 1 in \cite{dipr19} and Lemma \ref{tech_lem} above, this is equivalent to the pencils
$$
\left[\begin{array}{cccc}
\la C_{r}&-A_r&&\\
&\la C_{r-1}&\ddots&\\
&&\ddots&-A_{2}\\
-A_1&&&\la C_1
\end{array}\right]\quad\mbox{and}\quad
\left[\begin{array}{cccc}
\la B_r&-D_{r-1}\\
&\la B_{r-1}&\ddots&\\
&&\ddots&-D_1\\
-D_r&&&\la B_1
\end{array}\right]
$$
being regular and having disjoint spectra. But this is in turn equivalent to the pencils in the statement being regular and having disjoint spectra, because, for $M,N$ being two matrices of the same size, the pencil $ M-\la N$ is regular if and only if $M+\la N$ is regular, and $\la_0$ is an eigenvalue of $M-\la N$ if and only if $-\la_0$ is an eigenvalue of $M+\la N$ (understanding that $-\infty=\infty$).
\end{proof}

The case $r=1$ is not considered in Theorem \ref{maingen_th}. In this case, the system reduces to
 $  AXB-CXD=0.$
A characterization for the uniqueness of solution of this equation was obtained in \cite{chu87} in terms of pencils associated with the coefficient matrices. The result in \cite{chu87} is only presented for real matrices, but the proof is still valid for complex matrices. We reproduce the result here for completeness and for the ease of comparison with Theorem \ref{maingen_th}.

\begin{theorem}{\rm \cite[Th. 1]{chu87}.}
    For $A,C\in\C^{m\times m}$ and $B,D\in\C^{n\times n}$, the equation $AXB-CXD=0$ has only the trivial solution if and only if the pencils
    $A-\la C$ and $D-\la B$ are regular and have disjoint spectra.
 \end{theorem}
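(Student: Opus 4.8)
The plan is to exploit the fact that this statement is exactly the single–equation ($r=1$) analogue of Theorem \ref{maingen_th}, and to derive it by embedding the equation into a genuine periodic system to which Theorem \ref{maingen_th} applies. Concretely, I would consider the $r=2$ system of the form \eqref{system} given by $A X_1 B - C X_2 D = 0$ together with $X_2 - X_1 = 0$, i.e.\ the instance of \eqref{system} with $A_1=A,B_1=B,C_1=C,D_1=D$ and $A_2=C_2=I_m$, $B_2=D_2=I_n$. Its solutions are precisely the pairs $(X,X)$ with $AXB-CXD=0$, so the embedded system has only the trivial solution if and only if the equation $AXB-CXD=0$ does. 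Since $r=2>1$, Theorem \ref{maingen_th} characterizes this in terms of the two block pencils in \eqref{pencils}.

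The next step is to compute those two pencils for the embedded data and to match their spectral conditions with the conditions on $A-\la C$ and $D-\la B$. For $r=2$ the pencils in \eqref{pencils} become $\left[\begin{smallmatrix}\la I&I\\ A&\la C\end{smallmatrix}\right]$ and $\left[\begin{smallmatrix}\la I&D\\ I&\la B\end{smallmatrix}\right]$, whose determinants are $\det(\la^2 C-A)$ and $\det(\la^2 B-D)$ respectively (expand via the Schur complement for $\la\ne0$ and extend by polynomial identity). Hence the first pencil is regular iff $A-\la C$ is, the second iff $D-\la B$ is, and their finite eigenvalues are the square roots of the eigenvalues of $A-\la C$ and of $D-\la B$; the infinite eigenvalue appears in the first exactly when $C$ is singular (so when $\infty\in\Lambda(A-\la C)$), and likewise for the second with $B$. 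Since $\sqrt\mu$ and $\sqrt\nu$ (including the values $0$ and $\infty$) can coincide only when $\mu=\nu$, the two square-root sets are disjoint iff $\Lambda(A-\la C)\cap\Lambda(D-\la B)=\emptyset$. Putting these together, the embedded system is nonsingular iff both pencils $A-\la C$ and $D-\la B$ are regular with disjoint spectra, which is the claim. The only step requiring genuine care here is this spectral bookkeeping with the square roots and the $0/\infty$ conventions; everything else is routine.

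As a self-contained alternative (useful as a check), I would prove it directly. The map $X\mapsto AXB-CXD$ is $\C$-linear, so ``only the trivial solution'' means injectivity; simultaneously triangularizing each regular pencil by the generalized Schur (QZ) decomposition, with $UAV=T_A,\ UCV=T_C$ and $PDW=S_D,\ PBW=S_B$ all upper triangular, and setting $Y=V^*XP^*$, reduces the equation to the equivalent $T_AYS_B-T_CYS_D=0$ with triangular coefficients. For the forward direction, the $(i,l)$ entry has diagonal coefficient $(T_A)_{ii}(S_B)_{ll}-(T_C)_{ii}(S_D)_{ll}$, which disjointness of the spectra makes nonzero, so sweeping entries with $i$ from $m$ down to $1$ and $l$ from $1$ up to $n$ forces $Y=0$. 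For the converse I would build nonzero solutions when the hypothesis fails: for a common finite eigenvalue $\mu$, the rank-one matrix $X=uv^*$ works, where $u$ is a right eigenvector of $A-\la C$ ($Au=\mu Cu$) and $v$ a left eigenvector of $D-\la B$ ($v^*D=\mu v^*B$), giving $AXB-CXD=\mu(Cu)v^*B-(Cu)(\mu v^*B)=0$; the case $\mu=\infty$ uses $u\in\ker C$ and $v^*B=0$. The delicate remaining case — the \emph{main obstacle} of the direct route — is when one pencil, say $A-\la C$, is singular: there one must extract a minimal-degree polynomial null vector $x(\la)=\sum_p x_p\la^p$ from the Kronecker canonical form (so that its coefficients $x_p$ are linearly independent) and assemble $X=\sum_p x_p y_p^*$ with the $y_p$ satisfying a companion recurrence, which telescopes the residual to zero while linear independence keeps $X\ne0$. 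It is precisely this singular-pencil analysis that the embedding argument above avoids entirely, which is why I would present the embedding as the primary proof.
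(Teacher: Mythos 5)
Your primary (embedding) proof is correct, and it takes a genuinely different route from the paper: the paper offers no proof of this statement at all --- it quotes \cite[Th.~1]{chu87}, noting only that Chu's argument for real matrices carries over verbatim to the complex case --- and it explicitly remarks that the case $r=1$ is \emph{not} covered by Theorem~\ref{maingen_th}. You close that gap from inside the paper's own machinery: adjoining the dummy equation $X_2-X_1=0$ (that is, $A_2=C_2=I_m$, $B_2=D_2=I_n$) realizes the single equation as an $r=2$ instance of \eqref{system}, and your spectral bookkeeping is accurate --- the pencils \eqref{pencils} specialize to $\left[\begin{smallmatrix}\la I&I\\ A&\la C\end{smallmatrix}\right]$ and $\left[\begin{smallmatrix}\la I&D\\ I&\la B\end{smallmatrix}\right]$, with determinants $\det(\la^2C-A)$ and $\det(\la^2B-D)$, infinite eigenvalues occurring exactly when $C$ (respectively $B$) is singular, and passing to square roots (with the $0/\infty$ conventions) preserves both regularity and disjointness of spectra. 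The derivation is also non-circular, since Theorem~\ref{maingen_th} rests on the periodic Schur and formal-product results of \cite{dipr19}, not on Chu's theorem. What each approach buys: yours needs nothing beyond results already established in the paper and entirely sidesteps the analysis of singular pencils; the route the paper takes (citing Chu's direct proof) is self-contained and independent of the machinery of \cite{dipr19}.

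One caution on your ``self-contained alternative'': the triangularization sweep and the rank-one constructions for a common finite or infinite eigenvalue are fine, but the singular-pencil case is sketched too loosely to count as a proof. The natural term-by-term matching for $X=\sum_p x_p y_p^*$ would impose $y_{p+1}^*B=y_p^*D$ for $p=0,\dots,d-1$ together with $y_d^*D=0$, and these $d+1$ conditions can force all $y_p=0$ (already for $d=1$, $B=D=1$ scalars), even though nonzero solutions do exist there. The repair is to observe that the chain relations give $Ax_0=0$ and $Cx_d=0$, so the condition $y_d^*D=0$ is superfluous; then, if $B$ is invertible, run the recurrence forward from an arbitrary $y_0\neq0$, and if $B$ is singular, take $y_0=\dots=y_{d-1}=0$ and $y_d=w$ with $w^*B=0$, $w\neq0$, so that $X=x_dw^*\neq0$ solves the equation directly. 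Since you present the embedding as the primary proof, this gap does not invalidate the proposal, but it must be filled if you keep the direct argument.
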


\subsection{Other characterizations}\label{otherchar_sec}

By means of elementary block permutations, which preserve the regularity and the eigenvalues of matrix pencils, we can obtain other block-partitioned matrix pencils from the pencils in the statement of Theorem \ref{maingen_th}, which provide similar characterizations for the uniqueness of solution of \eqref{system}. More precisely, using
\begin{equation}\label{transform1}
\begin{bmatrix}
    &&&I\\&&I\\&\iddots\\
    I
\end{bmatrix}\begin{bmatrix}
    M_r&N_r\\&M_{r-1}&\ddots\\&&\ddots&N_2\\N_1&&&M_1
\end{bmatrix}\begin{bmatrix}
    &&&I\\&&I\\&\iddots\\
    I
\end{bmatrix}=\begin{bmatrix}
    M_1&&&N_1\\N_2&\ddots\\&\ddots&M_{r-1}\\&&N_r&M_r
\end{bmatrix},    
\end{equation}
together with
\begin{equation}
\begin{bmatrix}\label{transform2}
    &&I&\\&\iddots&\\I&&&\\&&&I
\end{bmatrix}\begin{bmatrix}
    M_r&N_{r-1}\\&\ddots&\ddots\\&&M_2&N_1\\N_r&&&M_1
\end{bmatrix}\begin{bmatrix}
    &&&I\\&&I\\&\iddots\\
    I
\end{bmatrix}=\begin{bmatrix}
    N_1&M_2\\&\ddots&\ddots\\&&N_{r-1}&M_r\\M_1&&&N_r
\end{bmatrix}    
\end{equation}
and
\begin{equation}\label{transform3}
\begin{bmatrix}
    &&&I\\&&I\\&\iddots\\
    I
\end{bmatrix} \begin{bmatrix}
    M_r&N_r\\&M_{r-1}&\ddots\\&&\ddots&N_2\\N_1&&&M_1
\end{bmatrix}\begin{bmatrix}
    I&&\\&&I\\&\iddots\\I
\end{bmatrix}=\begin{bmatrix}
    N_1&M_1\\&N_2&\ddots\\&&\ddots&M_{r-1}\\M_r&&&N_r
\end{bmatrix},
\end{equation}
we conclude, from Theorem \ref{maingen_th}, that \eqref{system} has a unique solution if and only if any of the following conditions hold:
\begin{itemize}
    \item[(i)] The pencils $\scriptsize\left[\begin{array}{@{\mskip3mu}c@{\mskip3mu}c@{\mskip3mu}c@{\mskip3mu}c@{\mskip3mu}}
        \la C_1&&&A_1\\A_2&\la C_2&\\&\ddots&\ddots\\&&A_r&\la C_r
    \end{array}\right]$ and $\scriptsize\left[\begin{array}{@{\mskip3mu}c@{\mskip3mu}c@{\mskip3mu}c@{\mskip3mu}c@{\mskip3mu}}
        \la B_1&&&D_r\\D_1&\la B_2&\\&\ddots&\ddots\\&&D_{r-1}&\la B_r
    \end{array}\right]$ are regular and have disjoint spectra.
    \item[(ii)] The pencils  $\scriptsize\left[\begin{array}{@{\mskip3mu}c@{\mskip3mu}c@{\mskip3mu}c@{\mskip3mu}c@{\mskip3mu}}
        A_1&\la C_1\\&A_2&\ddots\\&&\ddots&\la C_{r-1}\\\la C_r&&&A_r\end{array}\right]
    $
     and $\scriptsize
    \left[\begin{array}{@{\mskip3mu}c@{\mskip3mu}c@{\mskip3mu}c@{\mskip3mu}c@{\mskip3mu}c@{\mskip3mu}}
        D_1&\la B_2\\&D_2&\ddots\\&&\ddots&\la B_{r}\\\la B_1&&&D_r
    \end{array}\right]$ are regular and have disjoint spectra.
    \item[(iii)] The pencils $\scriptsize\left[\begin{array}{@{\mskip3mu}c@{\mskip3mu}c@{\mskip3mu}c@{\mskip3mu}c@{\mskip3mu}}
        \la A_1& C_1\\&\la A_2&\ddots\\&&\ddots& C_{r-1}\\C_r&&&\la A_r\end{array}\right]
    $
     and $\scriptsize
    \left[\begin{array}{@{\mskip3mu}c@{\mskip3mu}c@{\mskip3mu}c@{\mskip3mu}c@{\mskip3mu}}
       \la D_1&B_2\\&\la D_2& \ddots\\&&\ddots&B_{r}\\ B_1&&&\la D_r
    \end{array}\right]$ are regular and have disjoint spectra.
\end{itemize}
To get (i) we have applied the block row and block column permutations in \eqref{transform1} to both pencils in the statement of Theorem \ref{maingen_th}. To get (ii) we have applied the block row and block column permutations in \eqref{transform3} to the first pencil and the block row and block column permutations \eqref{transform2} to the second pencil in the statement of Theorem \ref{maingen_th}. Finally, to get (iii) we use Remark \ref{reversal_remark}.

\subsection{Theorem 5 in \cite{dipr19}}\label{br_sec}

In the prior work \cite[Th. 5]{dipr19} an incorrect characterization for the uniqueness of solution of \eqref{periodicsystem} is provided (see also \cite[Th. 3]{byers-rhee}). 

The claim there is that
if $A_i,B_i,C_i,D_i\in\mathbb C^{n\times n}$, for all $i=1,\hdots,r$, then the system
    \eqref{system} has only the trivial solution if and only if the matrix pencils
\begin{equation}\label{wrong-pencils}
\left[\begin{array}{cccc}
\la A_{1}&C_1&&\\
&\la A_{2}&\ddots&\\
&&\ddots&C_{r-1}\\
C_r&&&\la A_r
\end{array}\right]\quad\mbox{and}\quad
\left[\begin{array}{cccc}
\la D_{1}&B_1&&\\
&\la D_{2}&\ddots&\\
&&\ddots&B_{r-1}\\
B_r&&&\la D_r
\end{array}\right]
\end{equation}
are regular and have disjoint spectra. The following is a counterexample of this statement.

\begin{counterex}
Set $r=2$ and let $A_1=A_2=C_1=C_2=D_2=I_2$, $B_1=\left[\begin{smallmatrix}
        0&0\\1&0
    \end{smallmatrix}\right],B_2=\left[\begin{smallmatrix}
        0&1\\0&0
    \end{smallmatrix}\right]$, and $D_1=\left[\begin{smallmatrix}
        1&0\\0&0
    \end{smallmatrix}\right]$. Then the pencils in \eqref{wrong-pencils} read
    $$
    \begin{bmatrix}
        \la A_1&C_1\\C_2&\la A_2
    \end{bmatrix}=\begin{bmatrix}
        \la&0&1&0\\0&\la&0&1\\1&0&\la&0\\0&1&0&\la
    \end{bmatrix},\quad \mbox{and}\quad \begin{bmatrix}
        \la D_1&B_1\\B_2&\la D_2
    \end{bmatrix}=\begin{bmatrix}
        \la &0&0&0\\0&0&1&0\\0&1&\la&0\\0&0&0&\la
    \end{bmatrix}.
    $$
    Note that the pencils are both regular and have disjoint spectra, since $\det\left(\left[\begin{smallmatrix}
        \la A_1&C_1\\C_2&\la A_2
    \end{smallmatrix}\right]\right)=(\la^2-1)^2$ and $\det\left(\left[\begin{smallmatrix}
        \la D_1&B_1\\B_2&\la D_2
    \end{smallmatrix}\right]\right)=-\la^2$. However, the system \eqref{system} reads
    $$
    \begin{array}{l}
\left\{\begin{array}{ccc}
         \begin{bmatrix}
                x_1&x_2  \\
              x_3&x_4
         \end{bmatrix}\begin{bmatrix}
              0&0  \\
              1&0 
         \end{bmatrix}-\begin{bmatrix}
             y_1&y_2\\y_3&y_4
         \end{bmatrix}\begin{bmatrix}
             1&0\\0&0
         \end{bmatrix}& =&0, \\
         \ \begin{bmatrix}
                y_1&y_2  \\
              y_3&y_4
         \end{bmatrix}\begin{bmatrix}
             0&1\\0&0
         \end{bmatrix}-\begin{bmatrix}
             x_1&x_2\\x_3&x_4
         \end{bmatrix}&=&0, 
    \end{array}\right. \Longleftrightarrow \left\{\begin{array}{cc}
        x_1=x_3=0,  \\
        y_1=x_2,\ y_3=x_4, 
   \end{array}\right.\end{array}
    $$
    so the solution is not unique.

    If, instead, $B_1=\left[\begin{smallmatrix}
        0&1\\0&0
    \end{smallmatrix}\right]$ and $B_2=\left[\begin{smallmatrix}
        0&0\\1&0
    \end{smallmatrix}\right]$, with the rest of the coefficient matrices being as before, the pencils become:
     $$
    \begin{bmatrix}
        \la A_1&C_1\\C_2&\la A_2
    \end{bmatrix}=\begin{bmatrix}
        \la&0&1&0\\0&\la&0&1\\1&0&\la&0\\0&1&0&\la
    \end{bmatrix},\quad \mbox{and}\quad \begin{bmatrix}
        \la D_1&B_1\\B_2&\la D_2
    \end{bmatrix}=\begin{bmatrix}
        \la &0&0&1\\0&0&0&0\\0&0&\la&0\\1&0&0&\la
    \end{bmatrix},
    $$
    so that the second one is singular, and the associated system \eqref{system} reads
     $$
    \begin{array}{l}
\left\{\begin{array}{ccc}
         \begin{bmatrix}
                x_1&x_2  \\
              x_3&x_4
         \end{bmatrix}\begin{bmatrix}
              0&1  \\
              0&0 
         \end{bmatrix}-\begin{bmatrix}
             y_1&y_2\\y_3&y_4
         \end{bmatrix}\begin{bmatrix}
             1&0\\0&0
         \end{bmatrix}& =&0 \\
         \ \begin{bmatrix}
                x_1&x_2  \\
              x_3&x_4
         \end{bmatrix}\begin{bmatrix}
             0&0\\1&0
         \end{bmatrix}-\begin{bmatrix}
             y_1&y_2\\y_3&y_4
         \end{bmatrix}&=&0, 
    \end{array}\right.\Longleftrightarrow
   \left\{\begin{array}{ccc}
        \begin{bmatrix}
            -y_1&x_1\\-y_3&x_3
        \end{bmatrix}&=&0  \\
       \begin{bmatrix}
          x_2- y_1&-y_2\\x_4-y_3&-y_4
       \end{bmatrix} & =&0
   \end{array}\right.\\ \Longleftrightarrow \left\{\begin{array}{cc}
        x_1=x_2=x_3=x_4=y_1=y_2=y_3=y_4=0,
   \end{array}\right.\end{array}
    $$
    so it has a unique solution.
\end{counterex}

\section{Periodic systems with exactly one generalized conjugate-Sylvester equation}\label{conjugate_sec}

The main result in this section, which is the second main result in this work, is Theorem \ref{mainconj_th}, where we provide a characterization for the system \eqref{periodic-system} to have a unique solution when $X_1^s=\overline X_1$. This characterization is an immediate consequence of Theorem \ref{maingen_th} and the following Lemma. 

\begin{lemma}\label{doubled_lemma}
Let $A_i,C_i\in\C^{m\times m}$ and $B_i,D_i\in\C^{n\times n}$, for $i=1,\hdots,r$. The system
    \begin{equation}\label{periodicsystem}
    \left\{\begin{array}{cccc}
         A_iX_iB_i-C_iX_{i+1}D_i&=&0,&i=1,\hdots,r-1,  \\
         A_rX_rB_r-C_r\overline X_1D_r&=&0, 
    \end{array}\right.
    \end{equation}
    has only the trivial solution if and only if the system
    \begin{equation}\label{conjsystem}
         \left\{\begin{array}{cccl}
         A_iX_iB_i-C_iX_{i+1}D_i&=&0,&i=1,\hdots,r,  \\
         \overline A_iX_i\overline B_i-\overline C_i X_{i+1}\overline D_i&=&0,&i=r+1,\hdots 2r-1,\\
         \overline A_r X_{2r}\overline B_r-\overline C_r X_1\overline D_r&=&0,
    \end{array}\right.
    \end{equation}
    has only the trivial solution.
\end{lemma}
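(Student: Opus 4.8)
The plan is to exploit the fact that the doubled system \eqref{conjsystem} contains no conjugated unknowns, so its solution set $V$ is a genuine $\C$-vector space, and to exhibit an antilinear involution of $V$ whose fixed points are precisely the solutions of \eqref{periodicsystem}. Throughout I index the $2r$ unknowns and equations cyclically modulo $2r$ (so $X_{2r+k}:=X_k$), reading the coefficient four-tuple at position $i$ as $(A_i,B_i,C_i,D_i)$ for $1\le i\le r$ and as $(\overline{A_{i-r}},\overline{B_{i-r}},\overline{C_{i-r}},\overline{D_{i-r}})$ for $r<i\le 2r$. The key structural observation is that, for every $i$ modulo $2r$, the tuple at position $i+r$ is the entrywise conjugate of the tuple at position $i$ (using $\overline{\overline{A_j}}=A_j$).

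First I would record this symmetry precisely: conjugating the $i$-th equation of \eqref{conjsystem} turns its coefficients into their conjugates and replaces $X_i,X_{i+1}$ by $\overline{X_i},\overline{X_{i+1}}$, which by the observation above is exactly the $(i+r)$-th equation of \eqref{conjsystem} evaluated at shifted-and-conjugated unknowns. Consequently the map
\[
\sigma\colon V\to V,\qquad (\sigma X)_i:=\overline{X_{i+r}}\quad(\text{indices mod }2r),
\]
sends solutions to solutions. A direct computation gives $\sigma^2=\mathrm{id}$ and $\sigma(\alpha X)=\overline\alpha\,\sigma(X)$ for $\alpha\in\C$, so $\sigma$ is an antilinear involution of the complex vector space $V$.

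Next I would identify the fixed space $V^+:=\{X\in V:\sigma(X)=X\}$ with the solution set of \eqref{periodicsystem}. The condition $\sigma(X)=X$ reads $X_{i+r}=\overline{X_i}$ for all $i$; under this constraint the first $r$ equations of \eqref{conjsystem} become the first $r-1$ equations of \eqref{periodicsystem} together with $A_rX_rB_r-C_r\overline{X_1}D_r=0$, while the remaining $r$ equations are automatically the conjugates of these. Hence the truncation $X\mapsto(X_1,\dots,X_r)$ is a real-linear isomorphism from $V^+$ onto the solution space of \eqref{periodicsystem}, so one of these spaces is trivial exactly when the other is.

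Finally I would invoke the standard fact that an antilinear involution forces $V=\{0\}$ if and only if $V^+=\{0\}$: for any $X\in V$ both $X+\sigma(X)$ and $\ii\,(X-\sigma(X))$ lie in $V^+$, so if $V^+=\{0\}$ these vanish and $X=0$, while the converse is immediate. Chaining the equivalences (conjugate system trivial $\iff V^+=\{0\}\iff V=\{0\}\iff$ doubled system trivial) yields the claim. I expect the only delicate point to be the cyclic-index bookkeeping that makes $\sigma$ land back in $V$, specifically matching the two ``wrap-around'' equations (the $i=r$ equation and the final equation of \eqref{conjsystem}) under the shift by $r$; once the coefficient-conjugation symmetry is stated cleanly, everything else is routine linear algebra.
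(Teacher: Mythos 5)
Your proof is correct and is essentially the paper's own argument in structural clothing: the paper's symmetrized tuple $\widehat X=(X_1+\overline{X_{r+1}},\hdots,X_r+\overline{X_{2r}})$ is exactly the truncation of your $X+\sigma(X)$, and its fallback $(\ii X_1,\ldots,\ii X_r)$, used when $\widehat X=0$ (i.e., when $\sigma(X)=-X$), is, up to a factor $2$, the truncation of your $\ii(X-\sigma(X))$. The only difference is that you name the antilinear involution $\sigma$ and invoke the fixed-space fact abstractly, while the paper carries out the same construction concretely.
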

\begin{proof}
Note first that, if $(X_1,\hdots,X_r)$ is a nonzero solution of \eqref{periodicsystem}, then $(X_1,\hdots,X_r,\allowbreak\overline X_1,\hdots,\overline X_r)$ is a nonzero solution of \eqref{conjsystem}.

For the converse, if $X=(X_1,\hdots,X_r,X_{r+1},\hdots,X_{2r})$ is a nonzero solution of \eqref{conjsystem}, then $\widehat X:=(X_1+\overline  X_{r+1},\hdots,X_r+\overline X_{2r})$ is a solution of \eqref{periodicsystem}. If $\widehat X=0$, then $X_{r+1}=-\overline X_1$ and $\widetilde X:=(\ii X_1,\ldots, \ii X_r)$ is a solution of \eqref{periodicsystem}. Note that $\widetilde X$ is nonzero since, otherwise, $X_1=\cdots=X_r=0$ and $\widehat X=0$ implies that also $X_{r+1}=\cdots=X_{2r}=0$, contradicting that $X$ is nonzero.
\end{proof}

Just applying Theorem \ref{maingen_th} to the system of generalized Sylvester equations \eqref{conjsystem},
we obtain the following characterization for the uniqueness of solution of \eqref{periodicsystem}.

\begin{theorem}\label{mainconj_th}
Let $A_i,C_i\in\C^{m\times m}$ and $B_i,D_i\in\C^{n\times n}$, for $i=1,\hdots,r$. Then, the system
 \begin{equation*}
    \left\{\begin{array}{cccc}
         A_iX_iB_i-C_iX_{i+1}D_i&=&0,&i=1,\hdots,r-1,  \\
         A_rX_rB_r-C_r\overline X_1D_r&=&0, 
    \end{array}\right.
    \end{equation*}
    has only the trivial solution if and only if the pencils
    $$
    \footnotesize
    \left[\begin{array}{c@{\mskip3mu}c@{\mskip3mu}c@{\mskip3mu}c@{\mskip3mu}c@{\mskip3mu}c@{\mskip3mu}c}
        \la \overline C_r&\overline A_r\\&\ddots\ddots \\&&\la \overline C_1&\overline A_1\\&&&\la C_r& A_r\\&&&&\ddots\ddots\\&&&&&\la C_2&A_2\\A_1&&&&&&\la C_1
    \end{array}\right]\quad\mbox{\normalsize and}\quad
    \left[\begin{array}{c@{\mskip3mu}c@{\mskip3mu}c@{\mskip3mu}c@{\mskip3mu}c@{\mskip3mu}c@{\mskip3mu}c@{\mskip3mu}c}
        \la \overline B_r&\overline D_{r-1}\\&\ddots\ddots \\&&\la \overline B_2&\overline D_1\\&&&\la \overline B_1&D_r\\&&&&\la B_r& D_{r-1}\\&&&&&\ddots\ddots\\&&&&&&\la B_2&D_1\\\overline D_r&&&&&&&\la B_1
    \end{array}\right]
    $$
    are regular and have disjoint spectra.
\end{theorem}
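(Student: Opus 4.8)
The plan is to obtain Theorem~\ref{mainconj_th} as an immediate corollary of the two results it is placed between: Lemma~\ref{doubled_lemma} reduces the nonsingularity of the conjugate system \eqref{periodicsystem} to that of the pure generalized-Sylvester system \eqref{conjsystem}, and Theorem~\ref{maingen_th} characterizes the nonsingularity of a periodic system of generalized Sylvester equations through a pair of block-cyclic pencils. The first step is therefore simply to invoke Lemma~\ref{doubled_lemma}: the system \eqref{periodicsystem} has only the trivial solution if and only if \eqref{conjsystem} does.

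The second step is to observe that \eqref{conjsystem} is itself a system of exactly the form \eqref{system}, only with $2r$ equations and $2r$ unknowns. I would make this explicit by setting
\[
\mathcal A_k=A_k,\ \mathcal B_k=B_k,\ \mathcal C_k=C_k,\ \mathcal D_k=D_k\quad(1\le k\le r),
\]
\[
\mathcal A_k=\overline A_{k-r},\ \mathcal B_k=\overline B_{k-r},\ \mathcal C_k=\overline C_{k-r},\ \mathcal D_k=\overline D_{k-r}\quad(r<k\le 2r),
\]
and checking that the unknowns of \eqref{conjsystem} form a single cycle $X_1\to X_2\to\cdots\to X_{2r}\to X_1$, so that \eqref{conjsystem} is precisely the periodic system \eqref{system} of length $2r$ with coefficients $\mathcal A_k,\mathcal B_k,\mathcal C_k,\mathcal D_k$. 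Since $2r\ge 2$, the hypothesis $r>1$ of Theorem~\ref{maingen_th} holds for this length-$2r$ system (even when the original $r$ equals $1$).

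The third step is to apply Theorem~\ref{maingen_th} to the length-$2r$ system and then substitute. By that theorem, \eqref{conjsystem} has only the trivial solution if and only if the two pencils \eqref{pencils}, formed from $\mathcal A_k,\mathcal B_k,\mathcal C_k,\mathcal D_k$ with $r$ replaced by $2r$, are regular and have disjoint spectra. Substituting the definitions above into the first pencil yields diagonal blocks $\la\overline C_r,\dots,\la\overline C_1,\la C_r,\dots,\la C_1$, superdiagonal blocks $\overline A_r,\dots,\overline A_1,A_r,\dots,A_2$, and bottom-left corner $A_1$, which is exactly the first pencil in the statement; the same substitution in the second pencil gives diagonal $\la\overline B_r,\dots,\la\overline B_1,\la B_r,\dots,\la B_1$, superdiagonal $\overline D_{r-1},\dots,\overline D_1,D_r,D_{r-1},\dots,D_1$, and corner $\overline D_r$, which is exactly the second pencil in the statement.

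The substance of the argument lives entirely in the two cited results; the only place that demands care—and hence the main (and rather minor) obstacle—is the index bookkeeping in this last substitution. In particular I would double-check the two boundary positions where the conjugated half meets the unconjugated half: the unconjugated superdiagonal block $A_r$ immediately following $\overline A_1$ in the first pencil, and the unconjugated block $D_r$ sitting on the superdiagonal between the diagonal blocks $\la\overline B_1$ and $\la B_r$ in the second pencil, together with the unconjugated corner $A_1$ and the conjugated corner $\overline D_r$ arising from the cyclic closing equation. Confirming that these wrap-around entries carry exactly the conjugation pattern displayed in the statement is the one spot where an off-by-one slip could occur; everything else is a direct transcription.
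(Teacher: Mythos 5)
Your proposal is correct and follows essentially the same route as the paper: the paper also obtains Theorem~\ref{mainconj_th} as an immediate consequence of Lemma~\ref{doubled_lemma} combined with Theorem~\ref{maingen_th} applied to the doubled periodic system \eqref{conjsystem} of $2r$ generalized Sylvester equations. Your identification of the coefficients of the length-$2r$ system and the resulting index bookkeeping (including the wrap-around blocks $A_1$, $D_r$, and $\overline D_r$) match the pencils in the statement exactly.
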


Using the same approach as in Section \ref{otherchar_sec}, we can obtain similar characterizations for the uniqueness of solution of \eqref{periodicsystem}, namely, the system \eqref{periodicsystem} has only the trivial solution if and only if any of the following situations occur:
\begin{itemize}
    \item[(i)]  The pencils $$\footnotesize
    \left[\begin{array}{c@{\mskip3mu}c@{\mskip3mu}c@{\mskip3mu}c@{\mskip3mu}c@{\mskip3mu}c@{\mskip3mu}c}
        \la C_1&&&&&&A_1\\A_2&\la C_2 \\&\ddots&\ddots\\&&A_r&\la C_r\\&&& \overline A_1&\la\overline C_1\\&&&&\ddots\ddots\\&&&&& \overline A_{r}&\la \overline C_{r}
    \end{array}\right]\quad\mbox{\normalsize and}\quad
    \left[\begin{array}{c@{\mskip3mu}c@{\mskip3mu}c@{\mskip3mu}c@{\mskip3mu}c@{\mskip3mu}c@{\mskip3mu}c@{\mskip3mu}c}
        \la B_1&&&&&&&\overline D_r\\D_1&\la B_2 \\&\ddots&\ddots\\&&D_{r-1}&\la B_r\\&&& D_r&\la\overline B_1\\&&&&\overline D_1&\la\overline B_2\\&&&&&\ddots\ddots\\&&&&&& \overline D_{r-1}&\la \overline B_{r}
    \end{array}\right]
    $$
    are regular and have disjoint spectra. 
    \item[(ii)] The pencils $$\footnotesize
    \left[\begin{array}{c@{\mskip3mu}c@{\mskip3mu}c@{\mskip3mu}c@{\mskip3mu}c@{\mskip3mu}c@{\mskip3mu}c}
         A_1&\la C_1\\&\ddots\ddots \\&&A_r&\la C_r\\&&& \overline A_1&\la \overline C_1\\&&&&\ddots\ddots\\&&&&& \overline A_{r-1}&\la \overline C_{r-1}\\\la \overline C_r&&&&&& \overline A_r
    \end{array}\right]\quad\mbox{\normalsize and}\quad
    \left[\begin{array}{c@{\mskip3mu}c@{\mskip3mu}c@{\mskip3mu}c@{\mskip3mu}c@{\mskip3mu}c@{\mskip3mu}c@{\mskip3mu}c}
         D_1&\la B_2\\&\ddots\ddots \\&& D_{r-1}&\la B_r\\&&& D_r&\la \overline B_1\\&&&& \overline D_1&\la \overline B_2\\&&&&&\ddots\ddots\\&&&&&& \overline D_{r-1}&\la \overline B_{r}\\\la B_1&&&&&&& \overline D_r
    \end{array}\right]
    $$
    are regular and have disjoint spectra.
    \item[(iii)] The pencils $$\footnotesize
    \left[\begin{array}{c@{\mskip3mu}c@{\mskip3mu}c@{\mskip3mu}c@{\mskip3mu}c@{\mskip3mu}c@{\mskip3mu}c}
        \la A_1&C_1\\&\ddots\ddots \\&&\la A_r&C_r\\&&&\la \overline A_1&\overline C_1\\&&&&\ddots\ddots\\&&&&&\la \overline A_{r-1}&\overline C_{r-1}\\\overline C_r&&&&&&\la \overline A_r
    \end{array}\right]\quad\mbox{\normalsize and}\quad
    \left[\begin{array}{c@{\mskip3mu}c@{\mskip3mu}c@{\mskip3mu}c@{\mskip3mu}c@{\mskip3mu}c@{\mskip3mu}c@{\mskip3mu}c}
        \la D_1&B_2\\&\ddots\ddots \\&&\la D_{r-1}&B_r\\&&&\la D_r&\overline B_1\\&&&&\la \overline D_1&\overline B_2\\&&&&&\ddots\ddots\\&&&&&&\la \overline D_{r-1}&\overline B_{r}\\B_1&&&&&&&\la \overline D_r
    \end{array}\right]
    $$
    are regular and have disjoint spectra. 
\end{itemize}

\subsection{The case of a single conjugate-Sylvester equation}

It is interesting to consider independently a single generalized conjugate-Sylvester equation \eqref{eq:2}. 
Theorem \ref{mainconj_th} when $r=1$ is stated in the following result.

\begin{theorem}\label{singleconj_th}
Let $A,B\in\C^{m\times m}$, $C,D\in\C^{n\times n}$, and $E\in\C^{m\times n}$. Equation \eqref{eq:2} has a unique solution if and only if the pencils
$
\left[\begin{smallmatrix} 
\lambda \overline C & \overline A \\ A& \lambda C
\end{smallmatrix}\right]$ and $
\left[\begin{smallmatrix} 
\lambda \overline B &D \\ \overline D&\lambda B
\end{smallmatrix}\right]
$
are regular and have disjoint spectra.
\end{theorem}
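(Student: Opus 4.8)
The plan is to recognize Theorem~\ref{singleconj_th} as the specialization of Theorem~\ref{mainconj_th} to $r=1$, together with the standard reduction of ``unique solution'' to ``only the trivial solution.'' First I would invoke the $\R$-linear reformulation \eqref{rlinear} recorded in the introduction: since \eqref{eq:2} is equivalent to a square $\R$-linear system in the real and imaginary parts of $X$, it has a unique solution (for some, hence for every, right-hand side $E$) if and only if the homogeneous equation $AXB-C\overline X D=0$ has only the trivial solution. It therefore suffices to characterize when this homogeneous equation is nonsingular.

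Next I would observe that $AXB-C\overline X D=0$ is exactly the system \eqref{periodicsystem} in the case $r=1$: the family of plain Sylvester equations indexed by $i=1,\ldots,r-1$ is empty, and the single surviving equation is $A_1X_1B_1-C_1\overline X_1 D_1=0$. Hence the sought characterization is precisely Theorem~\ref{mainconj_th} read at $r=1$, and the whole task reduces to checking that the two block pencils displayed there collapse, when $r=1$, to the two $2\times2$ block pencils in the present statement.

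To make this explicit I would go through Lemma~\ref{doubled_lemma} directly rather than parse the $\ddots$ in the general pencils. For $r=1$ the doubled system \eqref{conjsystem} consists of only two equations in two unknowns,
\begin{equation*}
AX_1B-CX_2D=0,\qquad \overline A\,X_2\,\overline B-\overline C\,X_1\,\overline D=0,
\end{equation*}
which is a periodic system of \emph{generalized Sylvester} equations with two equations. Since its number of equations is $2>1$, Theorem~\ref{maingen_th} applies, with coefficients $\widetilde A_1=A,\widetilde B_1=B,\widetilde C_1=C,\widetilde D_1=D$ and $\widetilde A_2=\overline A,\widetilde B_2=\overline B,\widetilde C_2=\overline C,\widetilde D_2=\overline D$. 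Writing the two pencils \eqref{pencils} for $r=2$ with these data yields $\left[\begin{smallmatrix}\la\overline C&\overline A\\ A&\la C\end{smallmatrix}\right]$ and $\left[\begin{smallmatrix}\la\overline B&D\\ \overline D&\la B\end{smallmatrix}\right]$, exactly the pencils in the statement. Theorem~\ref{maingen_th} then gives nonsingularity of the doubled system iff these are regular with disjoint spectra, and Lemma~\ref{doubled_lemma} transfers this back to $AXB-C\overline X D=0$.

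I expect no genuine obstacle: the content is entirely contained in Lemma~\ref{doubled_lemma} combined with Theorem~\ref{maingen_th}, and what remains is only the bookkeeping of which coefficient lands in which block when $r=1$. The single point deserving care is the placement of the corner and off-diagonal entries: in the $2\times2$ case the bottom-left $A_1$ of the first pencil of \eqref{pencils} and the sub/superdiagonal entries of the second pencil must be matched correctly against $\widetilde A_1,\widetilde C_2$ and against $\widetilde D_1,\widetilde D_2$, which is what produces the conjugated diagonal entries $\overline C,\overline B$ and the conjugate $\overline D$ in the corner. Once this substitution is confirmed the theorem follows.
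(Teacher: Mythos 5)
Your proposal is correct and takes essentially the same route as the paper: the paper obtains Theorem~\ref{singleconj_th} by reading Theorem~\ref{mainconj_th} at $r=1$, and the proof of Theorem~\ref{mainconj_th} is precisely your chain of reasoning, namely Lemma~\ref{doubled_lemma} followed by Theorem~\ref{maingen_th} applied to the doubled two-equation system \eqref{conjsystem}. Your unrolling of that chain, including the reduction to the homogeneous case via \eqref{rlinear}, the observation that the doubled system has $2>1$ equations so Theorem~\ref{maingen_th} applies, and the block-by-block identification of the $r=2$ pencils \eqref{pencils} with $\left[\begin{smallmatrix}\la\overline C&\overline A\\ A&\la C\end{smallmatrix}\right]$ and $\left[\begin{smallmatrix}\la\overline B&D\\ \overline D&\la B\end{smallmatrix}\right]$, is exactly the bookkeeping implicit in the paper's statement.
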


Note that, since the coefficient matrix of the $\R$-linear system \eqref{rlinear} associated with Equation \eqref{eq:2} 
is square (of size $2mn\times 2mn$), Equation \eqref{eq:2} 
has a unique solution for some right-hand side $E$ if and only if it has a unique solution for every right-hand side $E$.

\begin{remark}\label{equivalence_rem}
The characterization for the uniqueness of solution of \eqref{eq:2} 
provided in Theorem {\rm\ref{singleconj_th}} is equivalent to the pencils
$
\left[\begin{smallmatrix} 
\lambda C & A \\ \overline A & \lambda \overline C
\end{smallmatrix}\right]$ and $
\left[\begin{smallmatrix}
\lambda \overline B &D \\\overline D &\lambda B
\end{smallmatrix}\right]
$
being regular and having disjoint spectra. This is so because 
$
\left[\begin{smallmatrix} 
\lambda C & A \\ \overline A & \lambda \overline C
\end{smallmatrix}\right]=\left[\begin{smallmatrix}
    0&I\\I&0
\end{smallmatrix}\right]\left[\begin{smallmatrix} 
\lambda \overline C & \overline A \\ A & \lambda C
\end{smallmatrix}\right]\left[\begin{smallmatrix}
    0&I\\I&0
\end{smallmatrix}\right],
$
and we use the fact that multiplying by constant invertible matrices preserves the regularity of matrix pencils, together with the eigenvalues.
\end{remark}

The uniqueness of solution of the standard conjugate-Sylvester equation 
$ AX-\overline XD=E$,
with $A\in\C^{m\times m}$ and $B\in\C^{n\times n}$, has been considered in \cite{bhh87,bhh88}. It is proved in \cite[Th. 3]{bhh87} that this equation has a unique solution if and only if the matrices $\overline AA$ and $\overline DD$ have disjoint spectra. This is a particular case of Theorem \ref{mainconj_th}. To see this, first note that the matrices $\overline AA$ and $\overline DD$ have disjoint spectra if and only if the formal matrix products $(-\overline A)I^{-1}(-A)I^{-1}$ and $(-\overline D)I^{-1}(-D)I^{-1}$ are regular and have disjoint spectra.  By Lemma \ref{tech_lem}, this is equivalent to the pencils 
$
\left[\begin{smallmatrix}\lambda I & A\\
\overline A& \lambda I\end{smallmatrix}\right]$ and $
\left[\begin{smallmatrix}\lambda I & D\\
\overline D & \lambda I\end{smallmatrix}\right]
$
being regular and having disjoint spectra, and, by Theorem \ref{singleconj_th} and Remark \ref{equivalence_rem}, this is in turn equivalent to the equation $AX-\overline XD=E$ having a unique solution.

\section{Conclusions and future work}\label{conclusion_sec}

We have obtained a characterization for a periodic system of generalized Sylvester and conjugate-Sylvester equations with at most one conjugate-Sylvester equation to have a unique solution when all unknown matrices have the same size and the coefficient matrices are square. We have also presented a procedure to reduce any system of generalized Sylvester and conjugate-Sylvester equations to  periodic systems with at most one conjugate-Sylvester equation, so that the provided characterization for the uniqueness of solution can be extended to such general systems.

A natural line of further research consists of looking for a characterization for the uniqueness of solution of systems of generalized Sylvester and conjugate-Sylvester equations when the unknown matrices have different sizes and the coefficient matrices are not necessarily square. The case of a single Sylvester equation (discussed in \cite{rectangular}) shows that the extension from square to rectangular coefficients is not immediate.

\bigskip

\noindent{\bf Acknowledgments}. We wish to thank Jie Meng, Federico Poloni and Leonardo Robol, for useful discussions on the topics of the paper. 

This research has been partially supported by grant PID2023-147366NB-I00 funded by MICIU/AEI/
10.13039/501100011033 and by FEDER/UE and RED2022-134176-T (Fernando De Ter\'an); by INdAM through a GNCS project, PRA 2022 WP 4.1 ``RATIONALISTS'' and ``AIDMIX'' funded by the University of Perugia, PRIN   2022ZK5ME7 CUP B53C24006410006 funded by Italian MUR and PNRR Vitality Spoke 9 (Bruno Iannazzo).

\end{document}